\documentclass{article}

\usepackage[a4paper, top=2cm, bottom=2cm, right=1.5cm, left=1.5cm]{geometry} 

\usepackage[T1]{fontenc}               
\usepackage[italian, english]{babel}     
\usepackage{lmodern}                   
\usepackage{textcomp}                  
\usepackage{microtype}                 

\usepackage{mathtools, amssymb, amsthm}

\usepackage{mathbbol}                       

\usepackage{mathrsfs}                  

\usepackage{dsfont}                    

\usepackage{accents}                   

\usepackage{aliascnt}                  

\usepackage{xcolor}                    
\definecolor{myurlcolor}{rgb}{0,0,0.4}
\definecolor{mycitecolor}{rgb}{0,0.5,0}
\definecolor{myrefcolor}{rgb}{0.5,0,0}

\usepackage{graphicx}                  
\usepackage{tikz}                      
\usepackage{tikz-cd}                   
\usetikzlibrary{graphs, decorations.pathmorphing, decorations.markings}
\usepackage{caption}                   

\usepackage{natbib}                    

\usepackage[pagebackref=true, colorlinks=true]{hyperref} 
\hypersetup{
    linkcolor=myrefcolor,
    citecolor=mycitecolor,
    urlcolor=myurlcolor,
}

\usepackage[capitalize, nameinlink]{cleveref}   

\usepackage{changepage}                
\usepackage{enumitem}                  
\usepackage{braket}                    

\newtheorem{theorem}{Theorem}[section]

\newcommand\nuovothm[3]{
  \newaliascnt{#1}{theorem}
  \newtheorem{#1}[#1]{#2}
  \aliascntresetthe{#1}
  \crefname{#1}{#2}{#3}
}

\nuovothm{lemma}{Lemma}{Lemmas}
\nuovothm{proposition}{Proposition}{Propositions}
\nuovothm{corollary}{Corollary}{Corollaries}
\nuovothm{definition}{Definition}{Definitions}
\nuovothm{remark}{Remark}{Remarks}
\nuovothm{assumption}{Assumption}{Assumptions}
\nuovothm{example}{Example}{Examples}
\nuovothm{question}{Question}{Questions}




\newcommand{\be}{\begin{equation}}
\newcommand{\ee}{\end{equation}}


\newcommand{\dd}{{\rm d}}
\newcommand{\de}{\partial}
\DeclareMathOperator\T{\textup{\textbf{T}}}


\title{The coisotropic embedding theorem for pre-symplectic manifolds: an alternative proof}

\author{L. Schiavone$^{1,2}$ \href{https://orcid.org/0000-0002-1817-5752}{\includegraphics[scale=0.7]{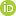}} \\ 
\footnotesize{$^{1}$\textit{Dipartimento di Matematica e Applicazioni Renato Caccioppoli, Università degli Studi di Napoli Federico II}}  \\
\footnotesize{$^{2}$\textit{e-mail: \texttt{luca.schiavone@unina.it}}} 
}

\begin{document}

\maketitle

\begin{abstract}
We present an alternative proof of the Coisotropic Embedding Theorem in which the geometric choice of a connection is recast as the algebraic choice of an embedding into the cotangent bundle. 
The symplectic thickening is then identified as the submanifold determined by the Hamiltonian momenta conjugate to the kernel directions of the pre-symplectic form.
\end{abstract}

\tableofcontents

\section*{Acknowledgements}

I acknowledge financial support from Next Generation EU through the project 2022XZSAFN – PRIN2022 CUP: E53D23005970006.
I am a member of GNSAGA (Indam).

\section*{Introduction}
\label{Sec:Introduction}
\addcontentsline{toc}{section}{\nameref{Sec:Introduction}}

The Coisotropic Embedding Theorem (CET), originally proven by \textit{Mark Gotay} in his seminal work on pre-symplectic manifolds \cite{Gotay-CoisotropicEmbedding-1982}, establishes that any pre-symplectic manifold can be embedded into a symplectic manifold in such a way that the original pre-symplectic manifold becomes a coisotropic submanifold. 
It has significant applications in both theoretical mathematics and applied fields, where it facilitates the study of constrained mechanical systems and reduction theory from various perspectives, as we will recall in this introduction.

\noindent In mathematical physics, Hamiltonian systems are geometrically formulated by means of symplectic manifolds (usually the cotangent bundle of a differential manifold, sometimes referred to as \textit{Phase Space}) endowed with a Hamiltonian function \cite{Arnold-ClassicalMechanics-1989, Abraham-Marsden-Ratiu-FoundationsMechanics-1978}.
On the other hand, constrained Hamiltonian systems are usually described on submanifolds of the Phase Space (selected by the constraints) on which the symplectic form becomes degenerate (see \cite{Gotay-Nester-Hinds-DiracBergmann-1978, Ciaglia-DiCosmo-Ibort-Marmo-Schiavone-Zampini-Peierls1-2024, Ciaglia-DiCosmo-Ibort-Marmo-Schiavone-Zampini-Symmetry-2022} and references therein). 
Among various interesting physical systems, all gauge theories lie in this class of dynamical systems.
The CET ensures that these constrained systems can still be analyzed within the framework of symplectic geometry by embedding the presymplectic manifold into a larger symplectic manifold \cite{Gotay-CoisotropicEmbedding-1982, Abraham-Marsden-Ratiu-FoundationsMechanics-1978, Guillemin-Sternberg-SymplecticTechniques-1990}. 

\noindent Recent advances in mathematical physics have highlighted the utility of the CET in various contexts. 
For instance, in \cite{Ciaglia-DiCosmo-Ibort-Marmo-Schiavone-Zampini-Symmetry-2022}, the theorem is used to establish a Noether-like one-to-one correspondence between symmetries and constants of motion within pre-symplectic Hamiltonian systems.
This correspondence is fundamental in understanding the conservation laws and invariant properties of constrained mechanical systems, thereby extending Noether's classical theorem to a broader context.
Furthermore, the CET has been employed to provide a Poisson structure on the space of solutions of certain field theories. 
For example, in \cite{Ciaglia-DiCosmo-Ibort-Marmo-Schiavone-Zampini-Peierls1-2024, Ciaglia-DiCosmo-Ibort-Marmo-Schiavone-Zampini-Peierls2-2022, Ciaglia-DiCosmo-Ibort-Marmo-Schiavone-Zampini-Peierls3-2022}, the theorem is used to pass from a pre-symplectic framework to a symplectic one, enabling the application of symplectic geometry and Poisson structures to the field theories considered, gauge theories, and Palatini's formulation of General Relativity in particular. 
Moreover, the theorem has been instrumental in addressing the inverse problem of the calculus of variations. 
In recent works \cite{Schiavone-InverseProblemElectrodynamics-2024, Schiavone-InverseProblemImplicit-2024}, the CET has been used to provide a solution for the inverse problem for a class of implicit differential equations. 
These examples illustrate how the theorem can be leveraged to construct variational principles for systems that are not explicitly variational.
Additionally, it is worth mentioning that the CET has significant applications within the Geometric Quantization program \cite{Gotay-Sniatycki-CoisotropicEmbeddings-1981}. 
Geometric Quantization is a procedure for constructing a quantum theory from a classical mechanical system, and requires a symplectic structure for the classical system as starting point. 
By embedding a pre-symplectic manifold into a symplectic one, the theorem facilitates the Quantization of constrained systems by ensuring that the necessary symplectic structure is available.

\noindent Several reformulations of the theorem have been proposed over the years.
We mention the approach of \textit{V. Guillemin} and \textit{S. Sternberg} that proved an equivariant (with respect to the action of a Lie group) version of the theorem \cite{Guillemin-Sternberg-SymplecticTechniques-1990} and the approach of \textit{Y. G. Oh} and \textit{J. S. Park} \cite{Oh-Park-DeformationsCoisotropic-2005} applied to the deformation of coisotropic submanifolds. 

\noindent In this work, we provide an alternative, albeit equivalent, proof of the theorem.
While the classical proof of the CET is well-established, this work aims to present an alternative construction that offers a distinct conceptual and pedagogical viewpoint.
Instead of geometrically constructing a complement to the kernel of the pre-symplectic form, our approach is rooted in the canonical structure of the cotangent bundle $\mathbf{T}^\star M$.
The symplectic thickening is identified by selecting the submanifold of $\mathbf{T}^\star M$ determined by the Hamiltonian functions conjugate to the kernel directions of the pre-symplectic structure.
This perspective recasts the usual arbitrary choice of a \textit{connection}---a purely geometric object---into the choice of an \textit{embedding} into the phase space, a concept more directly tied to its algebraic structure.

\noindent The content of this work is organized as follows. 

\noindent In \cref{Sec:Classical version of the Coisotropic Embedding Theorem}, we recall \textit{Oh} and \textit{Park}'s proof of the CET. 

\noindent In \cref{Sec:A new proof of the Coisotropic Embedding Theorem}, we introduce our alternative proof of the theorem. 

\section{Classical version of the Coisotropic Embedding Theorem}
\label{Sec:Classical version of the Coisotropic Embedding Theorem}

In this section, we recall the classical version of the CET.

\noindent Let us first start with some useful definitions and results.

\begin{definition}[\textsc{Symplectic manifold}] \label{Def: symplectic manifold}
A \textbf{symplectic manifold} is a manifold $M$ equipped with a closed, non-degenerate $2$-form $\omega$, say $(M,\, \omega)$.
\end{definition}

\noindent Here, we recall \textit{Darboux theorem} for symplectic manifolds (see, for instance, \cite{Abraham-Marsden-Ratiu-Manifolds-1988}).

\begin{theorem}[\textsc{Darboux theorem for symplectic manifolds}] \label{Thm: Darboux theorem symplectic}
Given a $2m$-dimensional symplectic manifold $(M,\, \omega)$, there exists a system of local coordinates $$\left\{\, 
x_1,\,...,\,x_m,\,y^1,\,...,\,y^m \,\right\} \,,$$ such that the symplectic form $\omega$ locally reads:
\be
\omega \,=\, \dd x_j \wedge \dd y^j \,.
\ee
\end{theorem}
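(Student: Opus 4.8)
The plan is to establish the statement locally near an arbitrary point $p \in M$ by means of Moser's homotopy method (the \emph{Moser trick}). First I would invoke the linear-algebraic normal form for a symplectic bilinear form: since $\omega_p$ is a non-degenerate skew-symmetric form on $\T_p M$, there is a basis of $\T_p M$ in which it takes the standard shape, and choosing coordinates adapted to this basis produces local coordinates in which the constant-coefficient form $\omega_0 := \dd x_j \wedge \dd y^j$ agrees with $\omega$ at the point $p$. The goal is then to find a local diffeomorphism fixing $p$ that pulls $\omega$ back to $\omega_0$, since composing it with these coordinates yields the desired Darboux chart.

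Next I would interpolate linearly, setting $\omega_t := \omega_0 + t(\omega - \omega_0)$ for $t \in [0,1]$. Each $\omega_t$ is closed, being an affine combination of the two closed endpoints, and since $\omega_t|_p = \omega_p$ is non-degenerate, $\omega_t$ remains non-degenerate on a neighborhood of $p$ uniformly in $t$, by compactness of $[0,1]$ and the openness of the non-degeneracy condition. The difference $\omega - \omega_0$ is closed and vanishes at $p$, so the Poincaré lemma supplies a $1$-form $\sigma$ with $\dd \sigma = \omega - \omega_0$, and a standard refinement of the lemma allows $\sigma$ to be chosen so that it vanishes at $p$.

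The heart of the argument is to produce a time-dependent vector field $X_t$ whose flow $\psi_t$ satisfies $\psi_t^\star \omega_t = \omega_0$ for all $t$. Differentiating this relation in $t$ and applying Cartan's formula together with $\dd \omega_t = 0$ reduces the requirement to $\iota_{X_t} \omega_t + \sigma = 0$, which, by non-degeneracy of $\omega_t$, has a unique solution $X_t$ at each point. Because $\sigma$ vanishes at $p$, so does $X_t$; hence $p$ is a fixed point of the flow, and $\psi_t$ is defined for the whole interval $t \in [0,1]$ on a possibly smaller neighborhood of $p$. Setting $\psi := \psi_1$ then gives $\psi^\star \omega = \omega_0$, and the functions $x_j \circ \psi^{-1}$ and $y^j \circ \psi^{-1}$ furnish the Darboux coordinates.

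The main obstacle I anticipate is the analytic bookkeeping rather than any conceptual difficulty: one must guarantee that the flow of the non-autonomous field $X_t$ exists on a single neighborhood for the entire interval $[0,1]$, and that the normalization of $\sigma$ at $p$ is genuinely achievable. Both points are resolved by shrinking the neighborhood and exploiting that $X_t$ vanishes at $p$, so the fixed point keeps the flow from escaping. An alternative, more classical route is the inductive construction of conjugate pairs $(x_j, y^j)$ by repeatedly straightening out Hamiltonian vector fields and restricting to transverse submanifolds of dimension $2m-2$; I would nonetheless favor the Moser approach for its brevity and its coordinate-free structure.
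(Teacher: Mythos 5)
Your Moser-trick argument is correct and complete in outline: the pointwise linear normal form, the closed non-degenerate interpolation $\omega_t$, the primitive $\sigma$ normalized to vanish at $p$, and the vector field defined by $\iota_{X_t}\omega_t+\sigma=0$ (which vanishes at $p$, so its flow exists up to $t=1$ near $p$) are exactly the standard steps. The paper itself offers no proof of this theorem---it is quoted with a citation to Abraham--Marsden--Ratiu---and your argument is precisely the standard one found in such references, so there is nothing to compare beyond noting that your proposal fills in a proof the paper deliberately omits.
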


\begin{definition}[\textsc{Symplectic orthogonal of a submanifold}]
Consider a symplectic manifold $(M,\, \omega)$.
Consider a submanifold $N$ of $M$. 
The \textbf{symplectic orthogonal} of $T_nN$, say ${T_nN}^\perp$ is defined as the set of vectors $v \in T_nM$ such that:
\be
\omega_n(v,\,w) \,=\, 0 \,, \;\;\; \forall \,\, w \in T_nN \,.
\ee
\end{definition}

\begin{definition}[\textsc{Coisotropic submanifold of a symplectic manifold}]
Given a symplectic manifold $(M,\, \omega)$, a submanifold $N$ of $M$ is said to be \textbf{coisotropic} if ${T_nN}^\perp \subseteq T_nM\,,\;\;\; \forall\,\,n \in N$.
\end{definition}

\begin{definition}[\textsc{Pre-symplectic manifold}]
A \textbf{pre-symplectic manifold} is a $n$-dimensional manifold $M$ equipped with closed, non-necessarily non-degenerate, $2$-form $\omega$, say $(M,\, \omega)$.
\end{definition}

\noindent Thus, $\omega$ here may be degenerate.
We will always assume here $\mathrm{rank}\, \omega \,=\, r < n$ to be constant.

\noindent Here, we recall the analog of \cref{Thm: Darboux theorem symplectic} for pre-symplectic manifolds (see, for instance, \cite{Abraham-Marsden-Ratiu-Manifolds-1988}).

\begin{theorem}[\textsc{Darboux theorem for pre-symplectic manifolds}] \label{Thm: Darboux theorem pre-symplectic}
Given a $(2m+r)$-dimensional pre-symplectic manifold $(M,\,\omega)$, there exists a system of local coordinates on $M$ $$\left\{\, x_1,\,...,\,x_m,\,y^1,\,...,\,y^m,\,z^1,\,...,\,z^r \,\right\} \,,$$ such that the pre-symplectic form $\omega$ locally reads:
\be
\omega \,=\, \dd x_j \wedge \dd y^j \,,
\ee
and, thus, $\mathrm{ker}\,\omega$ is spanned, at each point, by:
\be
\mathrm{ker}\,\omega \,=\, \langle \left\{\, \frac{\de}{\de z^1},\,...,\,\frac{\de}{\de z^r} \,\right\}\rangle \,.
\ee
\end{theorem}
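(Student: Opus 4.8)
The plan is to reduce the statement to the already-established symplectic Darboux theorem (\cref{Thm: Darboux theorem symplectic}) by quotienting out the kernel directions. The key structural observation is that $\ker\omega$ defines an involutive distribution of constant rank $r$: since $\omega$ has constant rank $2m$, the assignment $p \mapsto \ker\omega_p$ is a smooth rank-$r$ distribution, and closedness of $\omega$ forces it to be integrable. Concretely, I would expand $\dd\omega(X,Y,Z) = 0$ for $X,Y \in \ker\omega$ and arbitrary $Z$; every term except $-\omega([X,Y],Z)$ vanishes by definition of the kernel, so $\omega([X,Y],Z) = 0$ for all $Z$, hence $[X,Y] \in \ker\omega$. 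By the Frobenius theorem this yields local coordinates $(u^1,\dots,u^{2m},z^1,\dots,z^r)$ in which $\ker\omega = \langle \de/\de z^1,\dots,\de/\de z^r\rangle$, which already produces the desired $z$-coordinates.

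Second, I would show that in these coordinates $\omega$ involves neither the differentials $\dd z^k$ nor any dependence on the variables $z^k$. The first follows because contracting $\omega$ with $\de/\de z^k \in \ker\omega$ annihilates it, so $\omega = \tfrac12\,\omega_{ab}(u,z)\,\dd u^a \wedge \dd u^b$ with $a,b$ ranging over the $u$-indices only. For the second, I would use Cartan's formula: since $\iota_{\de/\de z^k}\omega = 0$ and $\dd\omega = 0$, the Lie derivative $\mathcal{L}_{\de/\de z^k}\omega = \dd\,\iota_{\de/\de z^k}\omega + \iota_{\de/\de z^k}\dd\omega$ vanishes, so each coefficient $\omega_{ab}$ is independent of the $z^k$. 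Consequently $\omega$ is the pullback of a well-defined $2$-form $\bar\omega = \tfrac12\,\omega_{ab}(u)\,\dd u^a \wedge \dd u^b$ on the local leaf space parametrized by $(u^1,\dots,u^{2m})$.

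Third, I would observe that $\bar\omega$ is closed (inherited from $\dd\omega = 0$) and non-degenerate, since its rank equals that of $\omega$, namely $2m$, on a $2m$-dimensional space. Applying \cref{Thm: Darboux theorem symplectic} to $\bar\omega$ supplies functions $x_1,\dots,x_m,y^1,\dots,y^m$ of the $u$-variables with $\bar\omega = \dd x_j \wedge \dd y^j$. Pulling these back and adjoining the $z^k$ gives a coordinate system $(x_1,\dots,x_m,y^1,\dots,y^m,z^1,\dots,z^r)$ on $M$ in which $\omega = \dd x_j \wedge \dd y^j$, and in which $\ker\omega$ is visibly spanned by $\de/\de z^1,\dots,\de/\de z^r$, as required.

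The hard part will be the bookkeeping ensuring that the symplectic Darboux coordinates obtained on the leaf space assemble with the $z^k$ into a genuine chart on $M$, that is, that $(x,y,z)$ are functionally independent. This is where the constant-rank hypothesis is essential: it guarantees that the $z$-independence of the $\omega_{ab}$ holds on a full neighborhood rather than pointwise, so that $\bar\omega$ descends honestly and the pulled-back $x_j,y^j$ remain independent of the $z^k$.
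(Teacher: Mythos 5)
Your argument is correct and is the standard proof of this result: involutivity of $\ker\omega$ from $\dd\omega=0$, Frobenius, basicness of $\omega$ via Cartan's formula, and reduction to the symplectic Darboux theorem on the local leaf space, with the final chart assembled from the block-triangular Jacobian of $(x,y,z)$ with respect to $(u,z)$. The paper gives no proof of this statement---it only cites Abraham--Marsden--Ratiu---so there is nothing to compare against, but your proposal is precisely the argument found in that reference and contains no gaps.
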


\noindent Let us now recall the classical way of proving the CET.

\begin{theorem}[\textsc{Coisotropic embedding theorem}] \label{Thm: coisotropic embedding theorem}
Let $(M, \omega)$ be a pre-symplectic manifold. 
There exists a symplectic manifold $(\tilde{M}, \tilde{\omega})$ such that $(M,\, \omega)$ is a coisotropic submanifold of $(\tilde{M},\, \tilde{\omega})$. 
The symplectic manifold $(\tilde{M},\, \tilde{\omega})$ is called the \textbf{symplectic thickening} of $(M,\, \omega)$.
\end{theorem}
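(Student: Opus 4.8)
The plan is to realize the symplectic thickening as a neighbourhood of the zero section of a vector bundle over $M$ whose fibres supply exactly the non-degenerate directions missing from $\omega$. Concretely, let $K = \ker\omega \subset TM$ be the characteristic (kernel) subbundle, which has constant rank by assumption, and let $p \colon E = K^\star \to M$ be its dual bundle, with zero section $\iota \colon M \hookrightarrow E$. I would take the candidate thickening $\tilde M$ to be a tubular neighbourhood of $\iota(M)$ in $E$. Following the conventions of \cref{Thm: Darboux theorem pre-symplectic}, where $\dim M = 2m+r$ and $\ker\omega$ is $r$-dimensional, one has $\dim E = \dim M + r = 2m + 2r$, so $M$ already sits inside $E$ with the codimension $r$ required for it to have any chance of being coisotropic.

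The construction of $\tilde\omega$ proceeds in the spirit of the canonical symplectic form on a cotangent bundle. First I would fix a splitting $TM = K \oplus W$ (this is the geometric datum---the \emph{connection}---that the present paper later replaces by an algebraic choice of embedding), with associated projection $\mathrm{pr}_K \colon TM \to K$. Using it I define a tautological $1$-form $\beta$ on $E$ by $\beta_{(m,\mu)}(v) = \mu(\mathrm{pr}_K(p_\star v))$ for $v \in T_{(m,\mu)}E$, and then set $\tilde\omega = p^\star\omega + \dd\beta$. Closedness is immediate, since $p^\star\omega$ is closed ($\omega$ being closed) and $\dd\beta$ is exact. Because $\beta$ vanishes identically along the zero section, one checks directly that $\iota^\star\tilde\omega = \omega$, so $\tilde\omega$ genuinely extends the pre-symplectic datum.

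The main obstacle is non-degeneracy, and it suffices to verify it along the zero section, after which it persists on a neighbourhood since non-degeneracy is an open condition. Along $\iota(M)$ the tangent space splits as $W \oplus K \oplus K^\star$, the last summand being vertical, and I would read off $\tilde\omega$ block by block: on $W$ the form restricts to $\omega|_W$, which is non-degenerate precisely because $W$ is a complement to the kernel; the troublesome $K$-directions, on which $\omega$ vanishes, are paired non-degenerately against the vertical $K^\star$-directions by $\dd\beta$, which reproduces the canonical pairing $(k,\dot\mu)\mapsto \dot\mu(k)$; and the remaining cross terms vanish on the zero section. Thus the degeneracy of $\omega$ is exactly cured by the fibre directions, and $\tilde\omega$ is symplectic near $\iota(M)$.

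Finally I would establish coisotropy by computing the symplectic orthogonal $(T_mM)^\perp$. Testing a vector $v = v_W + v_K + v_{\mathrm{vert}}$ against all of $T_mM = W \oplus K$, orthogonality against $W$ forces $v_W = 0$ by non-degeneracy of $\omega|_W$, while orthogonality against $K$ forces $v_{\mathrm{vert}} = 0$ by non-degeneracy of the $\dd\beta$-pairing, leaving $v = v_K \in K \subseteq T_mM$. Hence $(T_mM)^\perp = K_m \subseteq T_mM$, which is exactly coisotropy, and the count $\dim K_m = r$ matches the codimension as expected. An alternative to this explicit computation would be to invoke \cref{Thm: Darboux theorem pre-symplectic} to reduce everything to the local normal form and then patch the local models by a Moser-type argument, but the direct bundle construction has the pedagogical advantage of making the role of the fibre pairing transparent, and it isolates the choice of splitting $W$ as the single arbitrary ingredient.
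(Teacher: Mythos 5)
Your proposal is correct and is essentially the classical connection-based proof that the paper recalls in \cref{Sec:Classical version of the Coisotropic Embedding Theorem}: your bundle $E=K^\star$ is the paper's $\mathbf{K}^\star$, your splitting $TM=K\oplus W$ is the connection $P$, your tautological form $\beta$ is exactly $\theta^P$, and the non-degeneracy and coisotropy checks proceed identically. It does not follow the paper's advertised \emph{new} route in \cref{Sec:A new proof of the Coisotropic Embedding Theorem}, which instead embeds $\mathbf{K}^\star$ into the cotangent bundle $\T^\star M$ and pulls back $\omega_{\T^\star M}+\rho^\star\omega$, but that construction is shown there to reproduce precisely the same $\tilde{\omega}$, so the two arguments are equivalent.
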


\noindent The proof of the theorem follows three main steps:
\begin{itemize}
    \item \textbf{Construction of the symplectic thickening}. Consider a pre-symplectic manifold $(M, \omega)$. 
    Denote by $K$ the kernel of $\omega$, which is, by assumption, the same for each point of $M$. 
    Thus, it defines a vector subbundle $\mathbf{K}$ of $\T M$, called the \textbf{characteristic bundle}.
    Consider the dual vector bundle $\mathbf{K}^\star$, $\tau$ denoting the canonical projection onto $M$.
    The \textbf{symplectic thickening} of $M$ is (a suitable submanifold of) such a vector bundle.
    \item \textbf{Construction of the symplectic structure}. Consider a complement $W$ of $K$ into $T_mM$, namely a vector subspace of $T_mM$ such that $T_mM \,=\, W \oplus K$.
    It is not unique.
    Indeed, since $K$ is spanned by:
    $$
    \left\{\, \frac{\de}{\de z^1},\,...,\,\frac{\de}{\de z^r} \,\right\} \,,
    $$
    the complement $W$ may be any of the following vector subspaces of $T_m M$:
    \be
    W \,=\, \langle \, \left(\, \frac{\de}{\de x_j} + {P_x}^{ja} \frac{\de}{\de z^a}\,\right)_{j=1,...,m; a=1,...,r} ,\, \left(\,\frac{\de}{\de y^j} + {P_y}_j^a \frac{\de}{\de z^a}\,\right)_{j=1,...,m; a=1,...,r}\, \rangle \,,
    \ee
    where ${P_x}^{ja}$ and ${P_y}_j^a$ are the entries of real-valued rectangular matrices of maximal rank depending on the point $m$.
    The vectors appearing in the latter equation, which are parametrized by the arbitrary values of ${P_x}^{ja}$ and ${P_y}_j^a$, span the kernel of the following system of $1$-forms on $M$:
    \be \label{Eq: connection one-forms}
    P^a \,=\, \dd z^a - {P_x}^{ja} \dd x_j - {P_y}^a_j \dd y^j \,.
    \ee
    Such a system of $1$-forms fulfills the conditions:
    \be
    P^b \left(\, \frac{\de}{\de z^a} \,\right) \,=\, \delta_a^b \,,
    \ee
    which ensures that the $(1,\,1)$-tensor field:
    \be
    P \,:=\, P^a \otimes \frac{\de}{\de z^a} \,,
    \ee
    is idempotent, namely, it is, at each point, a "projector" onto $K$.
    Therefore, the $(1,\,1)$-tensor just constructed is such that, at each point, its image is $K$ and its kernel is the complementary space $W$.
    Such a tensor field is what is usually referred to as a \textit{connection} on $M$, and it is a useful way of parametrizing the arbitrariness in choosing the complement $W$.
    
    \noindent The decomposition $T_mM \,=\, W \oplus K$ provided by $P$ allows us to extend $\omega$ to a symplectic form on $\tilde{M}$.
    Indeed, associated to $P$, there is the following $1$-form on $\mathbf{K}^\star$:
    \be
    \theta^P_p(X_p) \,=\, \langle \,p,\,P\circ T\tau (X_p)\,\rangle\,,
    \ee
    where $X_p \in \mathbf{T}_p \mathbf{K}^\star$, and $p$ has to be interpreted as a point in $\mathbf{K}^\star$ on the left-hand side, and as a covector on $K_m$, with $m = \tau(p)$, on the right-hand side.
    Locally, the latter $1$-form reads
    \be
    \theta^P \,=\, {p_z}_a P^a \,,
    \ee
    where $\left\{\, {p_z}_a \,\right\}_{a=1,...,r}$ is a system of coordinates on the fibers of $\tau$.
    The $1$-form $\theta^P$ evidently depends on the particular connection $P$ chosen, as stressed by the notation.
    It allows one to define the following $2$-form on $\mathbf{K}^\star$
    \be
    \tilde{\omega} \,=\, \tau^\star \omega + \dd \theta^P \,.
    \ee
    It locally reads:
    \be \label{Eq: symplectic structure on the symplectic thickening}
    \tilde{\omega} \,=\, \tau^\star \omega + \dd {p_z}_a \wedge P^a + {p_z}_a \dd P^a \,. 
    \ee
    The $2$-form \eqref{Eq: symplectic structure on the symplectic thickening} is closed by construction, and, as it can be proven via a straightforward computation, is non-degenerate in a tubular neighborhood of the zero-section of $\tau$, which will be denoted as $\tilde{M}$ and represents the symplectic manifold we are searching for.
    As it is shown in \cite{Ciaglia-DiCosmo-Ibort-Marmo-Schiavone-Zampini-Peierls2-2022}, $\tilde{M}$ coincides with the whole $\mathbf{K}^\star$ when the connection $P$ is flat.
    \item \textbf{Check of the coisotropic condition}. 
    The original pre-symplectic manifold $M$ can be naturally embedded as the zero-section of $\tau$ into $\tilde{M}$, namely, it can be recovered as the submanifold of $\tilde{M}$ locally selected by the conditions $${p_z}_a \,=\,0 \,, \;\;\; \forall \,\, a = 1,...,r \,.$$
    A direct computation shows that the symplectic orthogonal of $M$ with respect to the symplectic structure $\tilde{\omega}$ is $\T_mM^\perp = K$. 
    Thus, $\T_mM^\perp \,=\, K \subset T_mM$ is a coisotropic subspace of $\tilde{M}$ by definition.
    \end{itemize}

\section{A new proof of the Coisotropic Embedding Theorem}
\label{Sec:A new proof of the Coisotropic Embedding Theorem}

Here, we propose our alternative proof of the theorem for pre-symplectic manifolds. Our proof will follow the same three steps as the classical one.
\begin{itemize}
    \item \textbf{Construction of the symplectic thickening.}
    Given the pre-symplectic manifold $(M, \omega)$, we begin with its cotangent bundle, $(\T^\star M,\, \omega_{\T^\star M})$, which is a canonical symplectic manifold.  
    Denote by $\rho$ the canonical projection $\rho \;\colon\;\; \mathbf{T}^\star M \to M$.
    Denote by: $$\{\, x_j,\, y^j, \, z^a \,\}_{j=1,...,m; a=1,...,r}$$ ($r$ being again the dimension of $\mathrm{ker}\,\omega$ and $n = 2m + r$ being the dimension of $M$) a system of Darboux coordinates on $M$ and by: $$\{\, x_j,\, y^j,\, z^a,\, {p_x}^j,\, {p_y}_j,\, {p_z}_a \,\}_{j=1,...,m; a=1,...,r}$$ an adapted system of coordinates on $\mathbf{T}^\star M$.
    In this system of coordinates 
    \be
    \omega \,=\, \dd x_j \wedge \dd y^j \,,
    \ee 
    and 
    \be
    \omega_{\mathbf{T}^\star M} \,=\, \dd {p_x}^j \wedge \dd x_j + \dd {p_y}_j \wedge \dd y^j + \dd {p_z}_a \wedge \dd z^a \,.
    \ee
    The form $\omega' \,=\, \omega_{\mathbf{T}^\star M} + \rho^\star \omega$ is symplectic as well.
    Indeed, to prove that it is non-degenerate it is sufficient to prove that $$\omega'^n \,=\, \omega' \underbrace{\,\wedge\, ...\, \wedge\,}_{n\, \text{times}} \omega'$$ is different from zero for $n$ equal to half the dimension of $\mathbf{T}^\star M$. 
    A direct computation shows that $\omega'^n \,=\, \omega_{\mathbf{T}^\star M}^n$, where $\omega_{\mathbf{T}^\star M}^n$ is different from zero because $\omega_{\mathbf{T}^\star M}$ is symplectic.
    
    \noindent In the system of coordinates chosen, $\omega'$ reads
    \be
    \omega' \,=\, \dd {p_x}^j \wedge \dd x_j + \dd {p_y}_j \wedge \dd y^j + \dd {p_z}_a \wedge \dd z^a + \dd x_j \wedge \dd y^j \,.
    \ee
    Recall that $\left\{\, \frac{\de}{\de z^a} \,\right\}_{a=1,...,r}$ is a system of generators of $\mathrm{ker}\, \omega \,=\, K$.
    Their cotangent lifts are those vector fields $\tilde{K}_a$ tangent to $\mathbf{T}^\star M$ defined by the condition
    \be
    \mathcal{L}_{\tilde{K}_a} \omega_{\mathbf{T}^\star M} \,=\, 0 \,,
    \ee
    where $\mathcal{L}_{\tilde{K}_a}$ denotes the Lie derivative along the vector field $\tilde{K}_a$.
    They are computed to be
    \be
    \tilde{K}_a \,=\, \frac{\de}{\de z^a} \,.
    \ee
    Denote by $\left\{\, H_a \,\right\}_{a=1,...,r}$ the Hamiltonian functions associated to $\tilde{K}_a$ via $\omega'$, namely those functions satisfying
    \be
    i_{\tilde{K}_a} \omega' \,=\, \dd H_a \,,
    \ee
    where $i_{\tilde{K}_a}$ denotes the contraction of the differential form along the vector field $\tilde{K}_a$.
    They read
    \be
    H_a \,=\, {p_z}_a \,.
    \ee

    \noindent The Hamiltonian functions $H_a$ are linear on the fibers of the cotangent bundle and represent the momenta conjugate to the kernel directions $z_a$. 
    
    \noindent We now construct the thickening manifold $\tilde{M}$ by applying the Gel'fand-Kolmogorov theorem \cite{Marmo-Morandi-SomeGeometry-1994}.
    \begin{proposition}
    The bundle $\mathbf{K}^\star$ of the classical proof of the CET is the manifold associated, by means of the Gel'fand-Kolmogorov theorem, with the algebra $\mathcal{A} \subset C^\infty(\T^\star M)$ generated by functions pulled back from $M$ and the kernel-momenta functions $\{H_a\}$. 
    \end{proposition}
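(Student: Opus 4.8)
The plan is to identify $\mathbf{K}^\star$ with the real spectrum of $\mathcal{A}$ --- the set of $\mathbb{R}$-valued algebra characters, which is exactly the datum reconstructed by the Gel'fand-Kolmogorov theorem --- by exhibiting a concrete surjective submersion $\T^\star M \to \mathbf{K}^\star$ whose pullback has image $\mathcal{A}$. To begin, I would describe $\mathcal{A}$ in the adapted Darboux coordinates: the pullbacks $\rho^\star C^\infty(M)$ are the smooth functions of $(x_j,\, y^j,\, z^a)$ alone, while the kernel-momenta are $H_a = {p_z}_a$. Hence $\mathcal{A}$ is the subalgebra of $C^\infty(\T^\star M)$ consisting of the functions of $(x_j,\, y^j,\, z^a,\, {p_z}_a)$, i.e. those not involving the momenta ${p_x}^j,\, {p_y}_j$ conjugate to the symplectic directions.

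Next I would introduce the restriction map $\Phi \colon \T^\star M \to \mathbf{K}^\star$ dual to the inclusion $\mathbf{K} \hookrightarrow \T M$: a covector $p \in \T^\star_m M$ is sent to its restriction to $K_m$, an element of $K_m^\star$. In coordinates this reads $(x_j,\, y^j,\, z^a,\, {p_x}^j,\, {p_y}_j,\, {p_z}_a) \mapsto (x_j,\, y^j,\, z^a,\, {p_z}_a)$, a surjective vector-bundle projection satisfying $\tau \circ \Phi = \rho$. Its pullback $\Phi^\star$ is an injective algebra morphism $C^\infty(\mathbf{K}^\star) \hookrightarrow C^\infty(\T^\star M)$ whose image is precisely $\mathcal{A}$, since the fiber coordinates on $\mathbf{K}^\star$ pull back to the $H_a$ and the base functions are shared; thus $\Phi^\star$ is an algebra isomorphism onto $\mathcal{A}$.

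Finally, invoking the Gel'fand-Kolmogorov theorem \cite{Marmo-Morandi-SomeGeometry-1994}, the manifold reconstructed from $\mathcal{A}$ is the one whose function algebra is $\mathcal{A}$, and through the isomorphism $\Phi^\star$ this is $\mathbf{K}^\star$. Concretely, a character $\chi \colon \mathcal{A} \to \mathbb{R}$ restricts to a character of $\rho^\star C^\infty(M) \cong C^\infty(M)$, which is evaluation at a unique point $m \in M$, while the values $\chi(H_a) \in \mathbb{R}$ fix a point of the fiber $K_m^\star$; the absence of relations among the $H_a$ makes the assignment $\chi \mapsto (m,\, \chi(H_a))$ a bijection onto $\mathbf{K}^\star$, compatible with the smooth structures.

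The point requiring care --- and the main obstacle --- is the reading of ``generated by''. Taken literally, the algebra generated over $\mathbb{R}$ by $\rho^\star C^\infty(M)$ and the $r$ functions $H_a$ is only the functions polynomial in ${p_z}_a$ with coefficients in $C^\infty(M)$, a proper subalgebra of $C^\infty(\mathbf{K}^\star)$. One must either read ``generated'' in the functional sense, closing under composition with smooth functions of the generators so that the image is all of $C^\infty(\mathbf{K}^\star)$, or observe that the character spectrum is insensitive to the distinction: the polynomial subalgebra already separates fiber points and realizes every value of $\chi(H_a)$, so both algebras have spectrum $\mathbf{K}^\star$. The only genuinely nontrivial analytic input is the identification of the characters of $C^\infty(M)$ with point-evaluations, which is the standard smooth form of the theorem assumed from \cite{Marmo-Morandi-SomeGeometry-1994}.
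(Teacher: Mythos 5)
Your proof is correct, and its core — identifying each character $\chi$ of $\mathcal{A}$ with a pair $(m,\,\alpha_m)$, the base point $m$ coming from the restriction of $\chi$ to $\rho^\star C^\infty(M)$ and the fiber point from the numbers $\chi(H_a)$ — is exactly the argument the paper gives. What you add is worthwhile: the explicit restriction map $\Phi \colon \T^\star M \to \mathbf{K}^\star$ dual to the inclusion $\mathbf{K} \hookrightarrow \T M$, together with the observation that $\Phi^\star$ is an algebra isomorphism of $C^\infty(\mathbf{K}^\star)$ onto (the functional closure of) $\mathcal{A}$, makes the identification canonical and coordinate-free rather than a mere bijection of point sets; and your remark on the two readings of ``generated by'' — polynomials in the $H_a$ with coefficients pulled back from $M$ versus closure under composition with smooth functions — addresses a point the paper silently glosses over, correctly noting that the character spectrum is insensitive to the distinction so the proposition holds either way. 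The one step both you and the paper leave implicit is that the bijection of character spaces is compatible with the smooth structures (the paper only observes that $\mathbf{K}^\star$ is a smooth manifold because it is a vector bundle); your map $\Phi^\star$ is precisely what makes that verification routine, so it would be worth stating that the smooth structure produced by the Gel'fand--Kolmogorov reconstruction is the one transported along $\Phi^\star$.
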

    \begin{proof}
    A point in the manifold associated with the algebra $\mathcal{A}$ can be identified with a character $\chi\,:\; \mathcal{A} \to \mathbb{R}$, which is an algebra homomorphism. 
    Any such character is determined by its action on the generators of $\mathcal{A}$.
    \begin{enumerate}
        \item The restriction of $\chi$ to the subalgebra of functions pulled back from the base, $\{f \circ \rho \;:\;\; f \in C^\infty(M)\}$, is a character on $C^\infty(M)$. 
        The space of characters of $C^\infty(M)$ is the manifold $M$ itself. 
        Therefore, for each $\chi$, there exists a unique point $m \in M$ such that $\chi(f \circ \rho) = f(m)$ for all $f \in C^\infty(M)$. 
        This fixes the base point of our new manifold.
        \item The action of $\chi$ on each Hamiltonian generator $H_a$ yields a real number, $p_a := \chi(H_a)$. 
        This set of numbers $\{p_a\}_{a=1}^r$ can be used to define a covector on the kernel fiber $K_m$.
        Specifically, we define $\alpha_m \in K_m^\star$ as the linear functional on $K_m$ such that $\braket{\alpha_m,\, \frac{\de}{\de z^a}} = p_a$.
    \end{enumerate}
    Thus, each character $\chi$ corresponds to a unique pair $(m, \alpha_m)$ where $m \in M$ and $\alpha_m \in K_m^\star$. 
    This establishes a bijection between the space of characters of $\mathcal{A}$ and the total space of the dual bundle $K^\star$. 
    Since $K^\star$ is a vector bundle over a smooth manifold, it is itself a smooth manifold. 
    \end{proof}

    \noindent There are many, non-equivalent ways of embedding $\mathbf{K}^\star$ into $\mathbf{T}^\star M$ as a vector subbundle.
    This class of embeddings is of the type
    \be
    \begin{split}
    \mathfrak i \;\colon \;\; \mathbf{K}^\star \to \mathbf{T}^\star M \;\;:\;\; &(\,x_j,\,y^j,\, z^a,\, {p_z}_a \,) \\ 
    \mapsto &\left(\, x_j,\, y^j,\, z^a,\, {p_x}^j \,=\, -{P_x}^{ja} {p_z}_a,\, {p_y}_j \,=\, -{P_y}^a_j {p_z}_a ,\, {p_z}_a\,\right) \,,
    \end{split}
    \ee
    where ${P_x}^{ja}$ and ${P_y}^a_j$ are the entries of two rectangular matrices of maximal rank depending on the base coordinates $(x_j,\,y^j,\, z^a)$, and where the minus sign is an arbitrary choice.
    The fact that $\mathfrak{i}$ is linear in the ${p_z}_a$ ensures that $\mathfrak{i}$ preserves the zero-section of $\rho$.
    On the other hand, the maximal rank hypothesis ensures that the fibres of $\rho\bigr|_{Im \, \mathfrak{i}}$ are all diffeomorphic, and, thus, the image of $\mathfrak{i}$ defines a vector subbundle of $\mathbf{T}^\star M$.

    \noindent It is crucial to note that the functions ${P_x}^{ja}$ and ${P_y}^a_j$ that define the embedding $\mathfrak{i}$ play precisely the same role as the components of the connection $1$-forms \eqref{Eq: connection one-forms} in the classical proof. 
    Our "choice of embedding" is therefore the direct counterpart of the classical "choice of a connection".

    \item \textbf{Construction of the symplectic structure.}
    The symplectic structure is constructed by taking the pullback of the ambient symplectic form $\omega'$ along the embedding $\mathfrak{i}$:
    \begin{equation}
        \tilde{\omega} := \mathfrak{i}^\star \omega' = \mathfrak{i}^\star (\omega_{\T^\star M} + \rho^\star \omega) = \mathfrak{i}^\star \omega_{\T^\star M} + \mathfrak{i}^\star \rho^\star \omega\,,
    \end{equation}
    which locally reads
    \begin{equation}
    \tilde{\omega} = \rho_{K^\star}^\star \omega + \dd p_a \wedge P^a + p_a \dd P^a\,,
    \end{equation}
    which is identical to the form \eqref{Eq: symplectic structure on the symplectic thickening} derived in the classical construction.
    Thus, as in the classical case, one can prove that $\tilde{\omega}$ is symplectic in a tubular neighborhood of the zero-section of the bundle $\tau \;:\;\; \mathbf{K}^\star \to M$, and that such a zero-section can be extended to the whole bundle if the connection $P$ is flat.
    Again, we will denote the tubular neighborhood above as $\tilde{M}$, and it represents the symplectic manifold we are searching for.
    \item \textbf{Check of the coisotropic condition.}
    Since the symplectic form $\tilde{\omega}$ we have constructed is identical to the one from the classical proof, the calculation of the symplectic orthogonal of $M$ is also identical.
    A direct computation confirms that for any point $m \in M$, the symplectic orthogonal of the tangent space to the embedded manifold is precisely the kernel $K_m$.
    Consequently, since $K_m \subseteq T_m M$, one concludes that $M$ is a coisotropic submanifold of $(\tilde{M}, \tilde{\omega})$.
\end{itemize}

\section*{Conclusions}
\label{Sec:Conclusions}
\addcontentsline{toc}{section}{\nameref{Sec:Conclusions}}

We provided an alternative proof of the CET for pre-symplectic manifolds that passes through the cotangent bundle of the pre-symplectic manifold and identifies the symplectic thickening as the submanifold of such a cotangent bundle selected by the Hamiltonian functions associated with the vectors in the kernel of the original pre-symplectic form.
The classical ambiguity in choosing a complement for the kernel of the pre-symplectic structure is recovered as the ambiguity in embedding such a submanifold into the cotangent bundle over the original pre-symplectic manifold.

\bibliographystyle{alpha}
\bibliography{Biblio}

\end{document}